\documentclass[12pt]{article}
\usepackage{amsmath,amsthm,amssymb}
\usepackage{amssymb,latexsym}
\newtheorem{theorem}{Theorem}

\newtheorem{lemma}{Lemma}

\textheight=21.5cm
\textwidth=16cm
\hoffset=-1cm
\parindent=16pt

\begin{document}

\baselineskip=17pt

\title{\bf Primes of the form {\boldmath$[\textrm{n}^c]$} with  square-free n }

\author{\bf S. I. Dimitrov}

\date{2022}

\maketitle

\begin{abstract}
Let $[\, \cdot\,]$ be the floor function.
In this paper we show that when $1<c<\frac{3849}{3334}$, then there exist infinitely many prime numbers of the form
$[n^c ]$, where $n$ is square-free.\\
\quad\\
\textbf{Keywords}: Prime numbers $\cdot$ Square-free numbers $\cdot$ Exponential sums \\
\quad\\
{\bf  2020 Math.\ Subject Classification}:  11L07 $\cdot$ 11N25
\end{abstract}

\section{Notations}
\indent

Let $x$ be a sufficiently large positive number.
The letter $p$  with or without subscript will always denote prime number.
By $\varepsilon$ we denote an arbitrary small positive constant.
As usual $\mu (n)$ and $\Lambda(n)$ denote respectively M\"{o}bius' function and von Mangoldt's function.
The notation $m\sim M$ means that $m$ runs through the interval $(M, 2M]$.
As usual $[t]$ and $\{t\}$ denote the integer part, respectively, the fractional part of $t$.
Moreover $e(y)=e^{2\pi i y}$ and $\psi(t)=\{t\}-1/2$.
We denote by $\tau _k(n)$ the number of solutions of the equation $m_1m_2\ldots m_k$ $=n$ in natural numbers $m_1,\,\ldots,m_k$.
Instead of $m\equiv n\,\pmod {k}$ we write for simplicity $m\equiv n\,(k)$.
We assume that $1<c<\frac{3849}{3334}$ and $\gamma=\frac{1}{c}$.
Denote
\begin{align}
\label{Scx}
&S_c(x)=\sum\limits_{n\leq x\atop{[n^c]=p}}\mu^2(n)\,;\\
\label{Scxwidetilde}
&\widetilde{S_c}(x)=\sum\limits_{n\leq x\atop{[n^c]=p}}\mu^2(n)\mu^2(n+1)\,;\\
\label{sigma}
&\sigma=\prod\limits_{p}\left(1-\frac{2}{p^2}\right)\,;\\
\label{z}
&z=\log^2x\,.
\end{align}

\section{Introduction and statement of the result}
\indent

The problems for the existence of infinitely many prime numbers of a special form are as 
interesting as well as difficult in prime number theory.
One of them is the representation of infinitely many prime numbers by polynomials.
It is conjectured that if $f(x)$ is any irreducible integer polynomial such that $f(1)$, $f(2)$, $\ldots$
tend to infinity and have no common factor greater than 1, then $f(n)$ takes infinitely many prime values. 
This problem has been completely solved for linear polynomials by the Dirichlet theorem on primes in
arithmetic progressions. Now it remains unsolved for polynomials of degree greater
than 1 and it seems to be out of reach of the current state of the mathematics.
For this reason, the mathematical world deals with the accessible problem of primes of the form $[n^c]$.
Let $\mathbb{P}$ denotes the set of all prime numbers.
In 1953 Piatetski-Shapiro \cite{Shapiro} has shown that for any fixed $1<c<\frac{12}{11}$ the set
\begin{equation*}
\mathbb{P}_c=\{p\in\mathbb{P}\;\;|\;\; p= [n^c]\;\; \mbox{ for some } n\in \mathbb{N}\}
\end{equation*}
is infinite.
The prime numbers of the form $p=[n^c]$ are called Piatetski-Shapiro primes.
Denote
\begin{equation*}
\pi_c(x)=\sum\limits_{n\leq x\atop{[n^c]=p}}1\,.
\end{equation*}
Piatetski-Shapiro's result states that
\begin{equation}\label{Shapiroformula}
\pi_c(x)=\frac{x}{c\log x}+\mathcal{O}\left(\frac{x}{\log^2x}\right)
\end{equation}
for
\begin{equation*}
1<c<\frac{12}{11}\,.
\end{equation*}
Subsequently the interval for $c$ was sharpened many times \cite{Baker-Harman}, \cite{Heath2}, \cite{Jia1},
\cite{Jia2}, \cite{Kolesnik1}, \cite{Kolesnik2}, \cite{Kumchev}, \cite{Leitmann}, \cite{Liu-Rivat}, \cite{Rivat}.
To achieve a longer interval for $c$ the authors used the fact that the upper bound for $c$ is closely connected
with the estimate of an exponential sum over primes.
The best results up to now belongs to Rivat and Sargos \cite{Rivat-Sargos} with \eqref{Shapiroformula}
for
\begin{equation*}
1<c<\frac{2817 }{2426}
\end{equation*}
and to Rivat and Wu \cite{Rivat-Wu} with
\begin{equation*}
\pi_c(x)\gg \frac{x}{\log x}
\end{equation*}
for
\begin{equation*}
1<c<\frac{243}{205}\,.
\end{equation*}
As researchers in additive prime number theory have asked whether different additive questions about the primes can be
resolved in prime numbers from special sets, Piatetski-Shapiro primes have become a favorite "test case" for some results.
Over the last three decades, number theorists have solved various equations and inequalities with Piatetski-Shapiro primes.
On the other hand researchers in multiplicative number theory have studied arithmetic properties of primes of the form $p=[n^c]$.
In 2014 Baker, Banks, Guo and Yeager \cite{Baker-Banks} considered for the first time
Piatetski-Shapiro primes $p=[n^c]$ under imposed conditions on the numbers $n$.
They showed that for any fixed $1<c<\frac{77}{76}$ there are infinitely many
primes of the form $p=[n^c]$, where $n$ is a natural number with at most eight prime factors.
In this connection, in 2016, the article of Banks,  Guo and Shparlinski \cite{Banks} appeared,
which contains a study of the existence of infinitely many prime numbers of the form $[p^c]$.

Inspired by Baker, Banks, Guo, Yeager and Shparlinski we investigate the existence of infinitely many
Piatetski-Shapiro primes $p=[n^c]$, such that $n$ or $n^2+n$ runs through the set of square-free numbers.
We show that for any fixed $1<c<\frac{3849}{3334}$ the sets
\begin{equation*}
\mathbb{T}_c=\{p\in\mathbb{P}\;\;|\;\; p= [n^c]\,, \;\; \mu^2(n)=1 \}
\end{equation*}
\begin{equation*}
\mathbb{\widetilde{T}}_c=\{p\in\mathbb{P}\;\;|\;\; p= [n^c]\,, \;\; \mu^2(n^2+n)=1 \}
\end{equation*}
are infinite.
More precisely we establish the following theorems.
\begin{theorem}\label{Theorem} Let $1<c<\frac{3849}{3334}$. Then for the sum \eqref{Scx} the asymptotic formula
\begin{equation}\label{asymptotic formula1}
S_c(x)=\frac{6}{c\pi^2}\frac{x}{\log x}+\mathcal{O}\left(\frac{x}{\log^2x}\right)
\end{equation}
holds.
\end{theorem}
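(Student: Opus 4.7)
\section*{Proof strategy}

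The plan is to combine the Piatetski--Shapiro detection of the condition $[n^c]\in\mathbb P$ with an elementary squarefree sieve, since $\mu^2$ is a local weight that separates cleanly from the Piatetski--Shapiro condition. Using $\mu^2(n)=\sum_{d^2\mid n}\mu(d)$ and truncating at the level $z=\log^2x$ from \eqref{z}, write
\begin{equation*}
S_c(x)=\sum_{d\le z}\mu(d)\sum_{\substack{n\le x,\ [n^c]=p \\ d^2\mid n}}1\;+\;\sum_{z<d\le\sqrt x}\mu(d)\sum_{\substack{n\le x,\ [n^c]=p \\ d^2\mid n}}1\;=\;\Sigma_1+\Sigma_2.
\end{equation*}
The tail $\Sigma_2$ is bounded trivially: discarding the prime condition gives $\Sigma_2\ll\sum_{d>z}x/d^2\ll x/z=x/\log^2x$, which lies inside the target error.

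The core task is then to prove, uniformly for $d\le z$, a Piatetski--Shapiro asymptotic along the multiples of $d^2$:
\begin{equation*}
\sum_{\substack{n\le x,\ [n^c]\in\mathbb P \\ d^2\mid n}}1\;=\;\frac{1}{d^2}\cdot\frac{x}{c\log x}+\mathcal O\!\left(\frac{x}{d^2\log^3x}\right).
\end{equation*}
Substituting this into $\Sigma_1$ and using $\sum_{d\le z}\mu(d)/d^2=6/\pi^2+\mathcal O(z^{-1})$ produces exactly the predicted main term $6x/(c\pi^2\log x)$ with an admissible $\mathcal O(x/\log^2x)$ error, so the theorem reduces entirely to this uniform estimate.

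For the estimate itself I would follow the standard Piatetski--Shapiro route: detect $[n^c]=p$ by $\mathbf 1_{p^\gamma\le n<(p+1)^\gamma}$, replace the integer parts by $\mathrm{id}-\psi-1/2$, and Fourier-expand $\psi$ up to a height $H\approx x^{\varepsilon}$. After a Vaughan or Heath--Brown decomposition of $\Lambda$ applied to the variable $p\sim x^c$, matters reduce to bounding type--I sums $\sum_m a_m\sum_k e\bigl(h\,(d^2mk)^\gamma\bigr)$ and type--II bilinear sums $\sum_m\sum_k a_m b_k\,e\bigl(h\,(d^2mk)^\gamma\bigr)$. The divisibility $d^2\mid n$ is absorbed by writing $n=d^2mk$; since $d^{2\gamma}\le(\log x)^{4}$, it merely rescales the derivative sizes harmlessly and is \emph{not} what drives the argument.

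The main obstacle is the bilinear exponential-sum estimation, which forces one to combine van der Corput's $A$- and $B$-processes with the Fouvry--Iwaniec / Robert--Sargos double large sieve and to optimise an exponent pair, in the spirit of Rivat--Sargos and Rivat--Wu. The numerical value $\frac{3849}{3334}$ is extracted from the sharp trade-off between the type--I cutoff and the type--II range after this optimisation; this exponential-sum analysis is where the bulk of the technical work of the paper lies.
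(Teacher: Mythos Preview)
Your outline matches the paper's proof: split at $z=\log^2 x$, bound the tail $\Sigma_2$ trivially, and for each $d\le z$ run the Piatetski--Shapiro machinery (detection, Vaaler expansion of $\psi$, Heath--Brown identity, Type~I/II exponential sums). The paper indeed obtains, for each $d\le z$, a power-saving error of the shape $x^{1-\varepsilon}d^{-2}$, which is more than enough for your claimed $O\!\bigl(x/(d^2\log^3 x)\bigr)$.

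One slip worth correcting: the bilinear variables $m,k$ come from the Heath--Brown decomposition of the \emph{prime} variable $p\sim N\le x^c$, not of $n$. After detection one counts multiples of $d^2$ in $[p^\gamma,(p+1)^\gamma)$, so the $\psi$-argument is $-p^\gamma/d^2$ and the phase after Fourier expansion is $e\bigl(h(mk)^\gamma/d^2\bigr)$, not $e\bigl(h(d^2mk)^\gamma\bigr)$; there is no meaningful factorisation ``$n=d^2mk$''. The $d$-dependence is still harmless for $d\le\log^2 x$ (now via the factor $d^{-2}$ in the amplitude), and the paper's choice $H=x^{\varepsilon-1}Nd^2$ --- rather than a flat $H\approx x^\varepsilon$ --- is tuned to this. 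For the record, the paper's Type~I bound comes from the Robert--Sargos three-variable monomial estimate, and the Type~II bound from Baker's bilinear theorem with the exponent pair $BA^5BA^2BA^2B(0,1)=\bigl(\tfrac{480}{1043},\tfrac{528}{1043}\bigr)$; the threshold $3849/3334$ falls out of the Type~II range constraint $N^{2\gamma}H_1x^{6\varepsilon-2}\ll L\ll N^{1/3}$.
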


\begin{theorem} Let $1<c<\frac{3849}{3334}$. Then for the sum \eqref{Scxwidetilde} the asymptotic formula
\begin{equation*}
\widetilde{S_c}(x)=\frac{\sigma x}{c\log x}+\mathcal{O}\left(\frac{x}{\log^2x}\right)
\end{equation*}
holds. Here $\sigma$ is defined by \eqref{sigma}.
\end{theorem}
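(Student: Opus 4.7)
The plan is to detect the extra factor $\mu^2(n+1)$ by M\"{o}bius inversion and then invoke an arithmetic-progression analogue of the method that yields Theorem~\ref{Theorem}. Writing $\mu^2(n+1)=\sum_{d^2\mid n+1}\mu(d)$ and truncating the outer divisor at $z=\log^2 x$ decomposes
\begin{equation*}
\widetilde{S_c}(x)=\sum_{d\le z}\mu(d)\sum_{\substack{n\le x,\ [n^c]=p\\ n\equiv -1\,(d^2)}}\mu^2(n)+R(x),
\end{equation*}
where the tail $R(x)$ comes from divisors $d>z$. Since the number of $n\le x$ with $d^2\mid n+1$ is at most $x/d^2+1$, and $d$ must satisfy $d\le\sqrt{x+1}$, one obtains $|R(x)|\le x\sum_{d>z}d^{-2}+\sqrt{x+1}\ll x/z=x/\log^2 x$.

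For each $d\le z$, I would establish the arithmetic-progression analogue of Theorem~\ref{Theorem}
\begin{equation*}
\sum_{\substack{n\le x,\ [n^c]=p\\ n\equiv -1\,(d^2)}}\mu^2(n)=\frac{6}{c\pi^2}\cdot\frac{1}{d^2}\prod_{p\mid d}\bigl(1-p^{-2}\bigr)^{-1}\frac{x}{\log x}+O\!\left(\frac{x}{d^2\log^2 x}\right),
\end{equation*}
uniformly for $d\le z$. This is the main obstacle: one has to rerun the proof of Theorem~\ref{Theorem} with the congruence $n\equiv -1\,(d^2)$ appended. Two things change. After the square-free inversion $\mu^2(n)=\sum_{k^2\mid n}\mu(k)$ and truncation at $z$, the joint condition $k^2\mid n$ and $d^2\mid n+1$ forces $\gcd(k,d)=1$ (any common prime would divide the consecutive integers $n$ and $n+1$), and the Chinese remainder theorem pins $n$ down to a single residue class modulo $k^2d^2\le\log^8 x$; the arithmetic factor $\prod_{p\mid d}(1-p^{-2})^{-1}$ then arises from the local density of square-free integers coprime to $d$, exactly as in the classical counting in progressions. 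The Piatetski--Shapiro input---Vaaler's truncated Fourier expansion of $\psi$ reducing $[n^c]=p$ to exponential sums, together with the subsequent Type~I/Type~II estimates---passes through with each exponential sum restricted to a residue class of modulus $k^2d^2$; since $k^2d^2\le\log^8 x$ is minuscule against the relevant parameters, the bounds lose only a factor $(kd)^{-2}$, and the admissible range $c<3849/3334$ is preserved.

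Substituting the asymptotic into the main term and extending the resulting sum over $d$ to infinity (the truncation error is $\ll 1/z$) yields
\begin{equation*}
\widetilde{S_c}(x)=\frac{6x}{c\pi^2\log x}\sum_{d=1}^{\infty}\frac{\mu(d)}{d^2}\prod_{p\mid d}\frac{p^2}{p^2-1}+O\!\left(\frac{x}{\log^2 x}\right).
\end{equation*}
The Dirichlet series is multiplicative and factors as an Euler product with local factor $1-1/(p^2-1)=(p^2-2)/(p^2-1)$, so
\begin{equation*}
\frac{6}{\pi^2}\prod_p\frac{p^2-2}{p^2-1}=\prod_p\frac{p^2-1}{p^2}\cdot\frac{p^2-2}{p^2-1}=\prod_p\Bigl(1-\frac{2}{p^2}\Bigr)=\sigma,
\end{equation*}
which collapses the constants and produces the main term $\sigma x/(c\log x)$. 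Everything outside the arithmetic-progression claim above is routine bookkeeping with Euler products and divisor tails.
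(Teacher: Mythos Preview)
Your proposal is correct and follows essentially the route the paper indicates: the paper gives no separate proof of Theorem~2, stating only that it ``is not essentially different from the proof of Theorem~1,'' and your argument is precisely a faithful execution of that analogy---expand the extra $\mu^2(n+1)$ by M\"{o}bius, truncate at $z$, combine with the inner expansion of $\mu^2(n)$ via the Chinese remainder theorem to obtain a single congruence of modulus $\le\log^8 x$, and then rerun the exponential-sum machinery of \S\ref{SectionSc1} with $d^2$ replaced by this new modulus (the shift in the residue only introduces a harmless unimodular phase $e(-hr/q)$ in the Fourier expansion). The only cosmetic difference is that the most literal reading of the paper would expand both $\mu^2$-factors simultaneously rather than nesting them, but the two organizations are equivalent and your Euler-product computation recovering $\sigma$ is exactly what either version produces.
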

The proof of Theorem 2 is not essentially different from the proof of Theorem 1.
For this reason, we will only give the proof of Theorem 1.

\newpage

\section{Preliminary lemmas}
\indent

\begin{lemma}\label{Exponentpairs}
Let $|f^{(m)}(u)|\asymp YX^{1-m}$  for $1\leq X<u<X_0\leq2X$ and $m\geq1$.
Then
\begin{equation*}
\bigg|\sum_{X<n\le X_0}e(f(n))\bigg|
\ll Y^\varkappa X^\lambda +Y^{-1}\,,
\end{equation*}
where $(\varkappa, \lambda)$ is any exponent pair.
\end{lemma}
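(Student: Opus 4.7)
The statement is a direct consequence of the axiomatic theory of exponent pairs, so my plan is simply to reduce it to the definition. By construction, an exponent pair $(\varkappa,\lambda)$ is any pair that can be obtained from the trivial pair $(0,1)$ through finitely many applications of the A-process (Weyl--van der Corput differencing combined with Cauchy--Schwarz) and the B-process (truncated Poisson summation, via the Kuzmin--Landau lemma). The hypothesis $|f^{(m)}(u)|\asymp YX^{1-m}$ for $m\geq 1$ is precisely the homogeneity condition required by this machinery: it gives $|f'|\asymp Y$, $|f''|\asymp Y/X$, and so on, and these scalings are preserved under both A and B processes with appropriate rescalings of $X$ and $Y$.

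First I would verify the base case $(\varkappa,\lambda)=(0,1)$, for which the trivial bound $|\sum_{X<n\leq X_0}e(f(n))|\leq X_0-X\leq X=Y^0X^1$ is majorized by the right-hand side, so no $Y^{-1}$ term is needed at this stage. Next, assuming that the bound $Y^\varkappa X^\lambda+Y^{-1}$ has already been established for a given exponent pair, I would show that the A-process produces the new pair $\bigl(\varkappa/(2\varkappa+2),\,(\lambda+\varkappa+1)/(2\varkappa+2)\bigr)$ by estimating the differenced sum with Cauchy--Schwarz and applying the inductive hypothesis to the resulting shifted exponential sums; the extra $Y^{-1}$ contribution is absorbed under this step. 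Finally, I would verify that the B-process produces $(\lambda-\tfrac12,\,\varkappa+\tfrac12)$ via truncated Poisson summation: after splitting the dual frequencies into $|h|\leq Y$ and $|h|>Y$, the former range gives the main contribution of size $Y^\varkappa X^\lambda$ through the stationary phase approximation (van der Corput's lemma on exponential integrals), while the latter tail, estimated by repeated integration by parts together with the boundary terms of the Poisson formula, is exactly what produces the $Y^{-1}$ error.

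The main subtlety I would expect is the careful bookkeeping inside the B-process, since that is the step that genuinely generates the $Y^{-1}$ term: both the truncation of the Poisson dual at $|h|\sim Y$ and the saddle-point error for the individual oscillatory integrals contribute $O(Y^{-1})$, and one must check that these do not blow up when $Y$ is close to $1$. In practice this is entirely standard (see Graham--Kolesnik's monograph on van der Corput's method, or Titchmarsh's treatise on the Riemann zeta function), so the lemma would normally be cited rather than reproved from scratch.
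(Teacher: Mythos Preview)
Your proposal is correct and aligns with the paper's own treatment: the paper does not prove this lemma but simply cites Graham--Kolesnik, \emph{Van der Corput's Method of Exponential Sums}, Chapter~3, which is exactly the reference you invoke at the end. Your sketch of the inductive $A$/$B$-process argument is the content of that chapter, so there is nothing to compare beyond noting that you have supplied more detail than the paper itself.
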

\begin{proof}
See (\cite{Graham-Kolesnik}, Ch. 3).
\end{proof}

\begin{lemma}\label{Robert-Sargosest} Let $H$, $N$, $M$ be positive integers and  $F$ is a real number greater than one.
Let $\alpha$, $\beta$ and $\gamma$ be real numbers such that $\alpha(\alpha-1)\beta\gamma\neq0$.
Set
\begin{equation*}
\Sigma_1=\sum\limits_{h=H+1}^{2H} \sum\limits_{n=N+1}^{2N}
\Bigg|\sum\limits_{m\sim M}e\Bigg(F\frac{m^\alpha h^\beta n^\gamma}{M^\alpha H^\beta N^\gamma}\Bigg)\Bigg|\,.
\end{equation*}
Then
\begin{equation*}
\Sigma_1\ll (HNM)^{1+\varepsilon} \left\{\left(\frac{F}{HNM^2}\right)^{\frac{1}{4}}+\frac{1}{M^{\frac{1}{2}}}+\frac{1}{F} \right\}\,.
\end{equation*}
\end{lemma}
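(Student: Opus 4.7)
This is a three-dimensional monomial exponential sum in ``product form'', and I would estimate it by the Robert--Sargos technique: two successive applications of Cauchy--Schwarz followed by a van der Corput second-derivative bound on the resulting two-dimensional inner sum. First, apply Cauchy--Schwarz on the outer indices $h$ and $n$ to obtain
\begin{equation*}
\Sigma_{1}^{2}\le HN\sum_{h\sim H,\;n\sim N}\Bigg|\sum_{m\sim M}e\!\left(F\frac{m^{\alpha}h^{\beta}n^{\gamma}}{M^{\alpha}H^{\beta}N^{\gamma}}\right)\Bigg|^{2}.
\end{equation*}
Expanding the square, swapping the order of summation, and writing $m_{2}=m_{1}+r$ with $|r|<M$ separates off a diagonal ($r=0$) contribution bounded by $(HN)^{2}M$, which on taking a square root accounts for the $M^{-1/2}$ term in the claim.

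For the off-diagonal range, Taylor expansion gives $(m_{1}+r)^{\alpha}-m_{1}^{\alpha}=\alpha r m_{1}^{\alpha-1}+O(r^{2}M^{\alpha-2})$, so up to lower-order corrections the phase is linear in $r$. I would then apply Cauchy--Schwarz a second time, now in the variable $m_{1}$, to square the remaining triple sum over $(h,n,r)$. After expanding and extracting the new diagonal, one is left with an inner double sum of the shape $\sum_{h\sim H,\,n\sim N}e(A\,h^{\beta}n^{\gamma})$, whose coefficient $A$ depends on $F$, $M$, and the two shift variables. Since $\alpha(\alpha-1)\beta\gamma\neq 0$, the mixed second derivative of $h^{\beta}n^{\gamma}$ is non-degenerate on the dyadic boxes, so the two-dimensional second-derivative test (equivalently van der Corput's $A$-process iterated in $h$ and then in $n$) yields a non-trivial bound. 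Summing this estimate over the shift variables and balancing against the diagonal pieces produces, after extracting a fourth root from the twice-squared inequality, the main term $(F/(HNM^{2}))^{1/4}$; the term $F^{-1}$ enters as a fallback in the regime where $F$ is so small that the second-derivative estimate is weaker than a trivial bound on the $m$-sum.

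\textbf{Main obstacle.} The delicate part will be the bookkeeping: one must track the normalizations accurately through the two Cauchy--Schwarz steps and partition the shift variables into cases so that each diagonal-like contribution that appears is absorbed by one of the three terms $(F/(HNM^{2}))^{1/4}$, $M^{-1/2}$, or $F^{-1}$. In addition, the hypothesis $\alpha(\alpha-1)\beta\gamma\neq0$ has to be used to guarantee that the second-derivative bound on the inner double sum holds uniformly across all the sub-ranges produced by the argument.
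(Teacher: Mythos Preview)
The paper does not prove this lemma at all: its entire proof reads ``See (\cite{Robert-Sargos}, Theorem~3).'' So there is nothing to compare at the level of the present paper; you are effectively proposing a proof of a result that the author is content to quote.

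As for your sketch versus the actual Robert--Sargos argument: the opening move (Cauchy--Schwarz in the outer variables $h,n$, then expanding the square over $m_1,m_2$ and separating the diagonal to produce the $M^{-1/2}$ term) is correct and is indeed how their proof begins. The divergence is in how the off-diagonal is handled. You propose a second Cauchy--Schwarz in $m_1$ followed by a two-dimensional second-derivative test on $\sum_{h,n}e(A\,h^{\beta}n^{\gamma})$. Robert and Sargos do \emph{not} proceed this way: their key input is a \emph{spacing lemma} (Theorem~2 of \cite{Robert-Sargos}) which counts quadruples $(h_1,n_1,h_2,n_2)$ with $|h_1^{\beta}n_1^{\gamma}-h_2^{\beta}n_2^{\gamma}|$ small, fed into a double-large-sieve type inequality. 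It is precisely this counting result, rather than any oscillatory estimate on the inner $(h,n)$-sum, that delivers the clean exponent $1/4$ without extraneous terms.

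Your route is not obviously wrong, but it carries a real risk: a straightforward iterated van der Corput bound on $\sum_{h,n}e(A\,h^{\beta}n^{\gamma})$ typically loses something compared with the spacing approach, and pre--Robert--Sargos treatments of this sum (e.g.\ Fouvry--Iwaniec) that relied on derivative tests produced bounds with additional terms beyond the three you are aiming for. If you want to carry your plan through, you would need to check carefully that the double Cauchy plus second-derivative bookkeeping really closes at $(F/(HNM^{2}))^{1/4}$; otherwise, citing \cite{Robert-Sargos} as the paper does is the safe option.
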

\begin{proof}
See (\cite{Robert-Sargos}, Theorem 3).
\end{proof}

\begin{lemma}\label{Bakerest}  Let $\alpha$, $\alpha_1$ and $\alpha_2$ be real numbers such that
$\alpha<1,\,\alpha\alpha_1\alpha_2\neq0$.
Let $M\geq1$, $M_1\geq1$, $M_2\geq1$. Let $a(m)$ and $b(m_1, m_2)$ be complex numbers with $|a(m)|\leq1$ and $|b(m_1, m_2)|\leq1$.
Set
\begin{equation*}
\Sigma_2=\sum\limits_{m\sim M}\sum\limits_{m_1\sim M_1}\sum\limits_{m_2\sim M_2}a(m)b(m_1, m_2)
e\Bigg(F\frac{m^\alpha m_1^{\alpha_1}m_2^{\alpha_2}}{M^\alpha M_1^{\alpha_1} M_2^{\alpha_2}}\Bigg)\,,
\end{equation*}
where
\begin{equation*}
F\geq  M_1M_2\,.
\end{equation*}
Then
\begin{equation*}
\Sigma_2\ll \big(MM_1M_2\log2M_1M_2\big)\left\{ \frac{1}{(M_1M_2)^{\frac{1}{2}}}
+\left(\frac{F}{M_1M_2}\right)^\frac{\varkappa}{2(1+\varkappa)} \left(\frac{1}{M}\right)^\frac{1+\varkappa-\lambda}{2(1+\varkappa)}\right\}\,,
\end{equation*}
where $(\varkappa, \lambda)$ is any exponent pair.
\end{lemma}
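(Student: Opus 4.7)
The plan is to apply Cauchy--Schwarz on the $m$-summation to eliminate the bounded coefficients $a(m)$, reducing the estimate to a one-variable exponential sum over $m$ that can be handled by the exponent-pair bound of Lemma~\ref{Exponentpairs}. Writing $\phi(m,m_1,m_2)=F(m/M)^{\alpha}(m_1/M_1)^{\alpha_1}(m_2/M_2)^{\alpha_2}$ and using $|a(m)|\le 1$, Cauchy--Schwarz gives
\begin{equation*}
|\Sigma_2|^2 \le M\sum_{m\sim M}\Bigl|\sum_{m_1\sim M_1}\sum_{m_2\sim M_2} b(m_1,m_2)\,e(\phi)\Bigr|^2.
\end{equation*}
Expanding the square and swapping the order of summation yields
\begin{equation*}
|\Sigma_2|^2 \le M\sum_{(m_1,m_2),(m_1',m_2')} b(m_1,m_2)\overline{b(m_1',m_2')}\, W(\Delta),
\end{equation*}
where $W(\Delta)=\sum_{m\sim M}e(F(m/M)^{\alpha}\Delta)$ and $\Delta=(m_1/M_1)^{\alpha_1}(m_2/M_2)^{\alpha_2}-(m_1'/M_1)^{\alpha_1}(m_2'/M_2)^{\alpha_2}$.

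Next I would isolate the diagonal contribution $(m_1,m_2)=(m_1',m_2')$, for which $\Delta=0$ and $W(0)\ll M$. The diagonal contributes at most $M\cdot M_1M_2\cdot M=M^2M_1M_2$, and after extracting the square root this gives the first summand $M(M_1M_2)^{1/2}$ in the announced bound.

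For the off-diagonal terms the phase $f(m)=F(m/M)^{\alpha}\Delta$ satisfies $|f^{(k)}(m)|\asymp F|\Delta|M^{-k}$ on $m\sim M$, thanks to $\alpha(\alpha-1)\ne 0$. Applying Lemma~\ref{Exponentpairs} with $X=M$ and $Y=F|\Delta|/M$ gives
\begin{equation*}
W(\Delta)\ll (F|\Delta|/M)^{\varkappa}M^{\lambda}+M/(F|\Delta|).
\end{equation*}
I would then sum this estimate over quadruples $(m_1,m_1',m_2,m_2')$ by a dyadic decomposition of $|\Delta|$: the number of quadruples with $|\Delta|$ in a window $(D,2D]$ is controlled by standard lattice-point counts for the smooth map $(m_1,m_2)\mapsto(m_1/M_1)^{\alpha_1}(m_2/M_2)^{\alpha_2}$, whose partial derivatives are bounded away from zero because $\alpha_1\alpha_2\ne 0$; the logarithmic factor $\log 2M_1M_2$ in the claimed bound emerges from this dyadic decomposition. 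The hypothesis $F\ge M_1M_2$ is used to dominate the $Y^{-1}$-contribution by the diagonal term.

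The principal obstacle will be this dyadic accounting: one must verify that summing $W(\Delta)$ against the multiplicities of the quadruples and then extracting the square root of $|\Sigma_2|^2$ balances to produce precisely the exponents $\varkappa/(2(1+\varkappa))$ and $(1+\varkappa-\lambda)/(2(1+\varkappa))$ appearing in the statement. Matching these exponents while keeping both parts of the exponent-pair estimate under control is the technically delicate point in Baker's original argument.
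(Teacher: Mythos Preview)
The paper does not prove this lemma at all: its entire proof is the one-line citation ``See (\cite{Baker}, Theorem 2).'' So there is no in-paper argument to compare against; you are supplying more than the author does.

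Your sketch is in the right spirit --- Cauchy--Schwarz to remove the weights, then an exponent-pair bound on the resulting free sum --- and the handling of the diagonal and of the $Y^{-1}$ term via the hypothesis $F\ge M_1M_2$ is correct. One point of caution, however: the exponents $\tfrac{\varkappa}{2(1+\varkappa)}$ and $\tfrac{1+\varkappa-\lambda}{2(1+\varkappa)}$ in the target bound are precisely those produced by one application of the $A$-process to the pair $(\varkappa,\lambda)$. If you do exactly what you wrote --- Cauchy--Schwarz in $m$, expand, and apply Lemma~\ref{Exponentpairs} directly to $W(\Delta)$ --- the off-diagonal contribution comes out as $(M_1M_2)^2F^{\varkappa}M^{\lambda-\varkappa}$ after summing over quadruples, which upon taking the square root gives exponents $\varkappa/2$ rather than $\varkappa/(2(1+\varkappa))$. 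Baker's actual argument inserts a Weyl shift of length $Q$ in the $m$-variable before the exponent-pair step, and it is the optimal choice of $Q$ (balanced against the diagonal) that produces the $A$-process exponents you see in the statement. So the ``technically delicate point'' is not really the dyadic accounting of $|\Delta|$ but this extra differencing step, which your outline omits.
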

\begin{proof}
See (\cite{Baker}, Theorem 2).
\end{proof}

\begin{lemma}\label{Heath-Brown} Let $G(n)$ be a complex valued function.
Assume further that
\begin{align*}
&P>2\,,\quad P_1\le 2P\,,\quad  2\le U<V\le Z\le P\,,\\
&U^2\le Z\,,\quad 128UZ^2\le P_1\,,\quad 2^{18}P_1\le V^3\,.
\end{align*}
Then the sum
\begin{equation*}
\sum\limits_{P<n\le P_1}\Lambda(n)G(n)
\end{equation*}
can be decomposed into $O\Big(\log^6P\Big)$ sums, each of which is either of Type I
\begin{equation*}
\mathop{\sum\limits_{M<m\le M_1}a(m)\sum\limits_{L<l\le L_1}}_{P<ml\le P_1}G(ml)
\end{equation*}
and
\begin{equation*}
\mathop{\sum\limits_{M<m\le M_1}a(m)\sum\limits_{L<l\le L_1}}_{P<ml\le P_1}G(ml)\log l\,,
\end{equation*}
where
\begin{equation*}
L\ge Z\,,\quad M_1\le 2M\,,\quad L_1\le 2L\,,\quad a(m)\ll \tau _5(m)\log P
\end{equation*}
or of Type II
\begin{equation*}
\mathop{\sum\limits_{M<m\le M_1}a(m)\sum\limits_{L<l\le L_1}}_{P<ml\le P_1}b(l)G(ml)
\end{equation*}
where
\begin{equation*}
U\le L\le V\,,\quad M_1\le 2M\,,\quad L_1\le 2L\,,\quad
a(m)\ll \tau _5(m)\log P\,,\quad b(l)\ll \tau _5(l)\log P\,.
\end{equation*}
\end{lemma}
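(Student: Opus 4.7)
My plan is to derive the decomposition from Heath-Brown's combinatorial identity for the von Mangoldt function, followed by a dyadic partition of every summation variable, and concluded by a routing argument that repackages the resulting variables as either a Type I or Type II sum.

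First, I would invoke the Heath-Brown identity with parameter $K=3$ and threshold $z_0=(2P_1)^{1/3}$:
\[
\Lambda(n)=\sum_{k=1}^{3}(-1)^{k-1}\binom{3}{k}\sum_{\substack{n=m_1\cdots m_k\,l_1\cdots l_k \\ m_1,\ldots,m_k\le z_0}}\mu(m_1)\cdots\mu(m_k)\log l_1,
\]
valid for every $n\le 2z_0^3$, hence for $P<n\le P_1$. The hypothesis $2^{18}P_1\le V^3\le Z^3$ gives $z_0\le Z$, so the ``small'' cutoff sits safely below the Type I threshold. Substituting into $\sum_{P<n\le P_1}\Lambda(n)G(n)$ reduces the problem to $O(1)$ multilinear convolutions in at most $2K=6$ variables, each coefficient being either a $\mu$-value or a logarithm, and therefore bounded by $\log P$.

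Second, I would dyadically partition each variable, writing $n_i\sim N_i$ with $N_i=2^{j_i}$. This introduces a factor $O(\log^{6}P)$ and produces multilinear sub-sums
\[
\sum_{n_1\sim N_1}\cdots\sum_{n_r\sim N_r}c_1(n_1)\cdots c_r(n_r)\,G(n_1\cdots n_r),\qquad r\le 6,
\]
subject to $\prod_{i=1}^{r}N_i\asymp P$ and with $N_i\le z_0\le Z$ for every $m$-variable. Third, I would order $N_1\ge N_2\ge\cdots\ge N_r$ and run a dichotomy on the maximum. If $N_1\ge Z$, take $l=n_1$ and absorb the remaining variables into $m=n_2\cdots n_r$; since $r-1\le 5$, standard divisor bounds yield $|a(m)|\ll\tau_5(m)\log P$, delivering a Type I sum. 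Otherwise every $N_i<Z$, and I would locate a subset $S\subset\{1,\ldots,r\}$ with $\prod_{i\in S}N_i\in[U,V]$ by taking the minimal subset whose product exceeds $U$; the hypotheses $U^2\le Z$, $128UZ^2\le P_1$, and $2^{18}P_1\le V^3$ are calibrated so that this product cannot overshoot $V$. Setting $l=\prod_{i\in S}n_i$ and $m=\prod_{i\notin S}n_i$ then produces a Type II sum with $|a(m)|,|b(l)|\ll\tau_5(\cdot)\log P$.

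The main obstacle is the routing step in the second branch: verifying that for every dyadic profile $(N_1,\ldots,N_r)$ with $\prod_iN_i\asymp P$ and each $N_i<Z$, the minimal subset whose product exceeds $U$ cannot overshoot $V$. The three size hypotheses encode precisely the room needed for this pigeonhole: the lower bound $V^3\ge 2^{18}P_1$ ensures $V$ is large enough to accommodate the final step, while $128UZ^2\le P_1$ together with $U^2\le Z$ prevent $U$ and the individual $N_i$ from swallowing the window $[U,V]$. Carefully confirming this, and tracking that the coefficient grouping in each case yields divisor bounds of exactly the advertised $\tau_5$ strength rather than a coarser $\tau_k$, is the technical heart of the proof; the remainder is a routine dyadic accounting of sums and coefficients.
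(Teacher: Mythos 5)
The paper itself offers no proof of this lemma --- it simply cites Heath-Brown --- so the only basis for comparison is the original argument, and your sketch does follow its broad outline: the Heath-Brown identity with $K=3$ and $z_0=(2P_1)^{1/3}$, a dyadic decomposition into $O(\log^6 P)$ pieces of at most six variables, the observation that $2^{18}P_1\le V^3$ forces every M\"obius-weighted variable below $V$ (indeed below $V/32$, so any variable of size $\ge Z$ is smooth), the Type I routing when some smooth variable is $\ge Z$, and the $\tau_5$ bounds obtained by grouping at most five variables into one. All of that is sound.

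The genuine gap is in the Type II routing. The ``minimal subset whose product exceeds $U$'' can overshoot $V$: a single smooth variable of dyadic size in $(V,Z)$ is already a subset exceeding $U$ whose product exceeds $V$, and two variables each of size just below $U$ combine to a product near $U^2$, which the hypotheses bound only by $Z$, not by $V$. So no local or greedy choice is guaranteed to land in $[U,V]$; what is true is the global statement that \emph{some} subset product lies in $[U,V]$, and proving it needs all three size hypotheses acting together. The standard argument is by contradiction: if every subset product avoids $[U,V]$ (with the dyadic slack $2^{|S|}\le 64$ absorbed into the constants), then sorting the variables decreasingly and forming greedy partial products partitions them into blocks whose products lie in $(V/64,\,Z]$ --- each block is either a single variable in $(V/64,Z)$ or a run of variables $<U$ whose product first exceeds $U$ and is therefore at most $U^2\le Z$ --- plus a leftover of product $<U$. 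Since the product of all the dyadic sizes exceeds $P_1/128\ge UZ^2$ while each block contributes at most $Z$ and the leftover at most $U$, there must be at least three blocks; but three blocks each exceeding $V/64$ force the total product above $V^3/2^{18}\ge P_1$, a contradiction. You correctly identified which hypotheses are relevant and flagged this step as the technical heart, but the mechanism you propose for it fails; it must be replaced by this block-counting argument (or one defers, as the paper does, to Heath-Brown's original lemma).
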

\begin{proof}
See (\cite{Heath1}).
\end{proof}

\begin{lemma}\label{Vaaler}
For every $H\geq1$, we have
\begin{equation*}
\psi(t)=\sum\limits_{1\leq|h|\leq H}a(h)e(ht)+\mathcal{O}\Bigg(\sum\limits_{|h|\leq H}b(h)e(ht)\Bigg)\,,
\end{equation*}
where
\begin{equation}\label{bh}
a(h)\ll\frac{1}{|h|}\,,\quad b(h)\ll\frac{1}{H}\,.
\end{equation}
\end{lemma}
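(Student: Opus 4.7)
The plan is to prove the approximation via Vaaler's construction of band-limited extremal majorants and minorants for the sawtooth: I would sandwich $\psi(t)$ between two trigonometric polynomials of degree $\le H$ whose average supplies the main sum and whose non-negative half-difference controls the error.

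First, I would build the sandwich. Start from Beurling's extremal majorant $B(z)$ of $\mathrm{sgn}(x)$, the unique entire function of exponential type $2\pi$ with $B(x)\ge \mathrm{sgn}(x)$ on $\mathbb{R}$ that minimises $\int_{\mathbb{R}}(B-\mathrm{sgn})\,dx$ (the minimum being $1$). Rescaling $B$ to exponential type $2\pi(H+1)$ and forming appropriate translates produces entire functions $F_H^\pm$ of type $2\pi(H+1)$ that bracket the $1$-periodic extension of $\psi$ on $\mathbb{R}$, with $\int_{\mathbb{R}}(F_H^+-F_H^-)\,dx = 1/(H+1)$. Since $\widehat{F_H^\pm}$ is supported in $[-(H+1),H+1]$, Poisson summation turns the periodisations $\Psi_H^\pm(t)=\sum_{n\in\mathbb{Z}}F_H^\pm(t+n)$ into $1$-periodic trigonometric polynomials of degree $\le H$, still obeying $\Psi_H^-(t)\le\psi(t)\le\Psi_H^+(t)$ off the integers.

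Next, I would extract the Fourier data. Set $V_H=\tfrac12(\Psi_H^++\Psi_H^-)$ and $E_H=\tfrac12(\Psi_H^+-\Psi_H^-)\ge 0$, so that $|\psi(t)-V_H(t)|\le E_H(t)$ pointwise. For $1\le|h|\le H$, the explicit Fourier transform of $B$ shows that the coefficient $a(h):=\widehat{V_H}(h)$ differs from the genuine Fourier coefficient $-1/(2\pi i h)$ of $\psi$ by $O(1/H)$, yielding $a(h)\ll 1/|h|$. Because $E_H$ is a \emph{non-negative} trigonometric polynomial of degree $\le H$ with mean $\int_0^1 E_H\,dt = 1/(2H+2)$, the Fej\'er-type bound $|\widehat{E_H}(h)|\le\widehat{E_H}(0)$ forces $b(h):=\widehat{E_H}(h)\ll 1/H$ uniformly in $|h|\le H$. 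Expanding $E_H(t)=\sum_{|h|\le H}b(h)e(ht)$ and substituting into the sandwich inequality gives the displayed formula for $\psi(t)$.

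The main obstacle is producing $\Psi_H^\pm$ that simultaneously satisfy (i) they are trigonometric polynomials of degree $\le H$, (ii) they are genuine pointwise majorant and minorant of $\psi$, and (iii) their $L^1$-distance is $\asymp 1/H$. Properties (i) and (ii) can be arranged by naive smoothing, but (iii) is precisely the content of Beurling's extremal theorem for the sign function, and without its sharp normalisation the resulting $b(h)$ would only satisfy $\ll 1$ rather than the required $\ll 1/H$. Once the extremal construction is in place, the remainder is routine Fourier bookkeeping.
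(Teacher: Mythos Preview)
Your sketch is correct and in fact goes well beyond what the paper does: the paper's entire proof is the one-line citation ``See \cite{Vaaler}'', whereas you have outlined the Beurling--Selberg extremal construction that actually underlies Vaaler's result. The steps you describe---rescaling Beurling's majorant of $\mathrm{sgn}$, periodising via Poisson summation to obtain trigonometric polynomials $\Psi_H^\pm$ of degree at most $H$ sandwiching $\psi$, and reading off the coefficient bounds from the $L^1$-gap $1/(H+1)$ together with the Fej\'er inequality $|\widehat{E_H}(h)|\le\widehat{E_H}(0)$---are exactly the ingredients of Vaaler's paper, so your approach is the same as the cited one, just unpacked.
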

\begin{proof}
See \cite{Vaaler}.
\end{proof}

\section{Beginning of the proof}
\indent

Our first maneuvers are straightforward.
Using \eqref{Scx}, \eqref{z} and the well-known identity
\begin{equation*}
\mu^2(n)=\sum_{d^2|n}\mu(d)
\end{equation*}
we write
\begin{align}\label{Scest}
S_c(x)&=\sum\limits_{n\leq x\atop{[n^c]=p}}\sum_{d^2|n}\mu(d)
=\sum_{d\leq \sqrt{x}}\mu(d)\sum\limits_{n\leq x\atop{[n^c]=p\atop{ n\equiv 0\,(d^2)}}}1\nonumber\\
&=S^{(1)}_c(x)+S^{(2)}_c(x)\,,
\end{align}
where
\begin{align}
\label{Sc1}
&S^{(1)}_c(x)=\sum_{d\leq z}\mu(d)\sum\limits_{n\leq x\atop{[n^c]=p\atop{ n\equiv 0\,(d^2)}}}1\,,\\
\label{Sc2}
&S^{(2)}_c(x)=\sum_{z<d\leq \sqrt{x}}\mu(d)\sum\limits_{n\leq x\atop{[n^c]=p\atop{ n\equiv 0\,(d^2)}}}1\,.
\end{align}
We shall estimate $S^{(1)}_c(x)$ and $S^{(2)}_c(x)$, respectively, in the sections \ref{SectionSc1} and \ref{SectionSc2}.
In section \ref{Sectionfinal} we shall finalize the proof of Theorem \ref{Theorem}.

\section{Estimation of $\mathbf{S^{(1)}_c(x)}$}\label{SectionSc1}
\indent

From \eqref{Sc1} we obtain
\begin{align}\label{Sc1est1}
S^{(1)}_c(x)&=\sum_{d\leq z}\mu(d)\sum\limits_{p\leq x^c}\sum\limits_{n\leq x\atop{[n^c]=p\atop{n\equiv 0\,(d^2)}}}1
=\sum_{d\leq z}\mu(d)\sum\limits_{p\leq x^c}\sum\limits_{n\leq x\atop{p^\gamma\leq n<(p+1)^\gamma\atop{n\equiv 0\,(d^2)}}}1\nonumber\\
&=\sum_{d\leq z}\mu(d)\sum\limits_{p\leq x^c}\sum\limits_{p^\gamma\leq n<(p+1)^\gamma\atop{n\equiv 0\,(d^2)}}1+\mathcal{O}(z)\nonumber\\
&=\sum_{d\leq z}\mu(d)\sum\limits_{p\leq x^c}\sum\limits_{p^\gamma d^{-2}\leq k<(p+1)^\gamma d^{-2}}1+\mathcal{O}(z)\nonumber\\
&=\sum_{d\leq z}\mu(d)\sum\limits_{p\leq x^c}\Big(\big[-p^\gamma d^{-2}\big]-\big[-(p+1)^\gamma d^{-2}\big]\Big)+\mathcal{O}(z)\nonumber\\
&=S^{(3)}_c(x)+S^{(4)}_c(x)+\mathcal{O}(z)\,,
\end{align}
where
\begin{align}
\label{Sc3}
&S^{(3)}_c(x)=\sum_{d\leq z}\frac{\mu(d)}{d^2}\sum\limits_{p\leq x^c}\big((p+1)^\gamma-p^\gamma\big)\,,\\
\label{Sc4}
&S^{(4)}_c(x)=\sum_{d\leq z}\mu(d)\sum\limits_{p\leq x^c}\big(\psi(-(p+1)^\gamma d^{-2})-\psi(-p^\gamma d^{-2})\big)\,.
\end{align}

\subsection{Asymptotic formula for $\mathbf{S^{(3)}_c(x)}$}
\indent

Bearing in mind \eqref{z}, \eqref{Sc3} and the well-known asymptotic formulas
\begin{align*}
&(p+1)^\gamma-p^\gamma=\gamma p^{\gamma-1}+\mathcal{O}\left(p^{\gamma-2}\right)\,,\\
&\sum\limits_{p\leq x^c}p^{\gamma-1}=\frac{x}{\log x}+\mathcal{O}\left(\frac{x}{\log^2x}\right)\,,\\
&\sum_{d\leq z}\frac{\mu(d)}{d^2}=\frac{6}{\pi^2}+\mathcal{O}\left(\frac{1}{z}\right)
\end{align*}
we derive
\begin{equation}\label{Sc3est}
S^{(3)}_c(x)=\frac{6}{c\pi^2}\frac{x}{\log x}+\mathcal{O}\left(\frac{x}{\log^2x}\right)\,.
\end{equation}

\subsection{Upper bound for $\mathbf{S^{(4)}_c(x)}$}
\indent

By \eqref{Sc4} and Abel's summation formula it follows
\begin{equation}\label{Sc4est1}
S^{(4)}_c(x)\ll zx^{\frac{c}{2}}\log^2x+\sum_{d\leq z}\max_{2\leq t \leq x^c}|\Sigma(t)|\,,
\end{equation}
where
\begin{equation}\label{Sigma}
\Sigma(t)=\sum\limits_{n\leq t}\Lambda(n)\big(\psi(-(n+1)^\gamma d^{-2})-\psi(-n^\gamma d^{-2})\big)\,.
\end{equation}
Splitting the range of $n$ into dyadic subintervals from \eqref{Sc4est1} and \eqref{Sigma} we get
\begin{equation}\label{Sc4est2}
S^{(4)}_c(x)\ll zx^{\frac{c}{2}}\log^2x+(\log x)\sum_{d\leq z}\max_{N\leq x^c}\big|S^{(5)}_c(N)\big|\,,
\end{equation}
where
\begin{equation}\label{Sc5}
S^{(5)}_c(N)=\sum\limits_{n\sim N}\Lambda(n)\big(\psi(-(n+1)^\gamma d^{-2})-\psi(-n^\gamma d^{-2})\big)\,.
\end{equation}
Using the trivial estimate for $S^{(5)}_c(N)$ from \eqref{Sc4est2} and \eqref{Sc5} we have
\begin{equation}\label{Sc4est3}
S^{(4)}_c(x)\ll zx^{\frac{c}{2}}\log^2x+\frac{x}{\log^2x}
+(\log x)\sum_{d\leq z}\max_{\frac{x}{z\log^4x}\leq N\leq x^c}\big|S^{(5)}_c(N)\big|\,.
\end{equation}
Henceforth we will use that
\begin{equation}\label{Nxc}
\frac{x}{z\log^4x}\leq N \leq x^c\,.
\end{equation}
Now \eqref{Sc5} and Lemma \ref{Vaaler} imply
\begin{equation}\label{Sc5est}
S^{(5)}_c(N)=S^{(6)}_c(N)+S^{(7)}_c(N)+S^{(8)}_c(N)\,,
\end{equation}
where
\begin{align}
\label{Sc6}
&S^{(6)}_c(N)=\sum\limits_{n\sim N}\Lambda(n)\sum\limits_{1\leq|h|\leq H}a(h)
\big(e(-h(n+1)^\gamma d^{-2})-e(-hn^\gamma d^{-2})\big)\,,\\
\label{Sc7}
&S^{(7)}_c(N)\ll\sum\limits_{n\sim N}\Lambda(n)\sum\limits_{|h|\leq H}b(h)e(-hn^\gamma d^{-2})\,,\\
\label{Sc8}
&S^{(8)}_c(N)\ll\sum\limits_{n\sim N}\Lambda(n)\sum\limits_{|h|\leq H}b(h)e(-h(n+1)^\gamma d^{-2})\,.
\end{align}
Henceforth we will use that
\begin{equation}\label{dleqz}
d\leq z\,.
\end{equation}
Further we choose
\begin{equation}\label{HxNd}
H=x^{\varepsilon-1}Nd^2\,.
\end{equation}
It is easy to see that \eqref{z}, \eqref{Nxc} and \eqref{HxNd} lead to $H\geq 1$.
From \eqref{z}, \eqref{bh}, \eqref{Nxc}, \eqref{Sc7}, \eqref{dleqz}, \eqref{HxNd}
and Lemma \ref{Exponentpairs} with exponent pair $\left(\frac{1}{2},\frac{1}{2}\right)$ we deduce
\begin{align}\label{Sc7est}
S^{(7)}_c(N)&\ll(\log N)\Bigg(b_0N+\sum\limits_{1\leq|h|\leq H}b(h)\sum\limits_{n\sim N}e(-hn^\gamma d^{-2})\Bigg)\nonumber\\
&\ll(\log N)\Bigg(NH^{-1}+\sum\limits_{1\leq|h|\leq H}b(h)\Big(|h|^\frac{1}{2}N^\frac{\gamma}{2}d^{-1}+|h|^{-1}N^{1-\gamma} d^2\Big)\Bigg)\nonumber\\
&\ll(\log N)\Bigg(NH^{-1}+H^{-1}\sum\limits_{1\leq|h|\leq H}\Big(|h|^\frac{1}{2}N^\frac{\gamma}{2}d^{-1}+|h|^{-1}N^{1-\gamma} d^2\Big)\Bigg)\nonumber\\
&\ll(\log N)\Big(NH^{-1}+H^\frac{1}{2}N^\frac{\gamma}{2}d^{-1}+N^{1-\gamma}d^2\Big)\nonumber\\
&\ll x^{1-\varepsilon}d^{-2}\log x\,.
\end{align}
In the same way for the sum defined by \eqref{Sc8} we obtain
\begin{equation}\label{Sc8est}
S^{(8)}_c(N)\ll x^{1-\varepsilon}d^{-2}\log x\,.
\end{equation}
It remains to estimate the sum $S^{(6)}_c(N)$. By \eqref{bh} and \eqref{Sc6} we have
\begin{equation}\label{Sc6est1}
S^{(6)}_c(N)\ll\sum\limits_{1\leq|h|\leq H}\frac{1}{h}
\bigg|\sum\limits_{n\sim N}\Lambda(n)\Phi_h(n)e(-hn^\gamma d^{-2})\bigg|\,,
\end{equation}
where
\begin{equation*}
\Phi_h(t)=e\big(ht^\gamma d^{-2}-h(t+1)^\gamma d^{-2}\big)-1\,.
\end{equation*}
Bearing in mind the estimates
\begin{equation*}
\Phi_h(t)\ll|h|N^{\gamma-1}d^{-2}\,,  \quad \Phi'_h(t)\ll|h|N^{\gamma-2}d^{-2}
\end{equation*}
for  $t\in[N,2N]$ and using Abel's summation formula from \eqref{Sc6est1} we derive
\begin{align}\label{Sc6est2}
S^{(6)}_c(N)&\ll\sum\limits_{1\leq h\leq H}\frac{1}{h}\bigg|\Phi_h(2N)\sum\limits_{n\sim N}\Lambda(n)e(-hn^\gamma d^{-2})\bigg|\nonumber\\
&+\sum\limits_{1\leq h\leq H}\frac{1}{h}\int\limits_{N}^{2N}\bigg|\Phi'_h(t)\sum\limits_{N<n\leq t}\Lambda(n)e(-hn^\gamma d^{-2})\bigg|\,dt\nonumber\\
&\ll N^{\gamma-1}d^{-2}\big|S^{(9)}_c(N_1)\big|\,,
\end{align}
where
\begin{equation}\label{Sc9}
S^{(9)}_c(N_1)=\sum\limits_{1\leq h\leq H}\bigg|\sum\limits_{N<n\leq N_1}\Lambda(n)e(hn^\gamma d^{-2})\bigg|
\end{equation}
for some number $N_1\in (N, 2N]$.\\
Put
\begin{equation}\label{F}
F=H_1L^\gamma M^\gamma d^{-2}\,.
\end{equation}
\begin{lemma}\label{SIest} Assume that
\begin{equation}\label{Conditions1}
H_1\leq H \,,\quad H_2\sim H_1 \,,\quad   |a(m)|\leq 1 \,,\quad LM\asymp N\,,\quad
L\gg N^{\frac{1}{2}-\gamma}H_1^{-\frac{1}{2}}x^{1-3\varepsilon} \,.
\end{equation}
Set
\begin{equation}\label{SI}
S_I=\sum\limits_{H_1\leq h\leq H_2}\bigg|\sum\limits_{m\sim M}a(m)\sum\limits_{l\sim L}e\big(hm^\gamma l^\gamma d^{-2}\big)\bigg|\,.
\end{equation}
Then
\begin{equation*}
S_I\ll x^{1-2\varepsilon}N^{1-\gamma}\,.
\end{equation*}
\end{lemma}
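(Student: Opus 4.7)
The plan is to treat $S_I$ as a Type I exponential sum and bound the inner $l$-sum by a van der Corput exponent pair estimate. Since $|a(m)|\leq 1$, one first moves the absolute value inside to obtain
\begin{equation*}
S_I \le \sum_{H_1\le h\le H_2}\sum_{m\sim M}\Bigg|\sum_{l\sim L}e\bigl(hm^\gamma l^\gamma d^{-2}\bigr)\Bigg|.
\end{equation*}
For the inner sum, with phase $f(l)=hm^\gamma l^\gamma d^{-2}$, the $k$-th derivative satisfies $|f^{(k)}(l)|\asymp Y L^{-k}$ for $l\sim L$, where $Y=hm^\gamma L^\gamma d^{-2}$. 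Lemma~\ref{Exponentpairs} applied with an exponent pair $(\varkappa,\lambda)$ therefore yields
\begin{equation*}
\Bigg|\sum_{l\sim L}e(f(l))\Bigg|\ll Y^{\varkappa}L^{\lambda}+Y^{-1}.
\end{equation*}

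Next, I would sum this bound over $m\sim M$ and $h\in[H_1,H_2]$, using $LM\asymp N$ to simplify. A short calculation converts the first term into something of shape $H_1^{1+\varkappa}N^{1+\gamma\varkappa}L^{\lambda-1}d^{-2\varkappa}$, while the $Y^{-1}$ term becomes $N^{1-\gamma}L^{-1}d^{2}\log H_1$. The tail term is automatically $\ll x^{1-2\varepsilon}N^{1-\gamma}$ since $d\leq z=\log^2 x$ and $L\gg 1$; the work is to show the main term is also $\ll x^{1-2\varepsilon}N^{1-\gamma}$, and this is precisely where the hypothesis $L\gg N^{1/2-\gamma}H_1^{-1/2}x^{1-3\varepsilon}$ is used, together with $H_1\le H\asymp x^{\varepsilon-1}Nd^2$ from \eqref{HxNd}.

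The main obstacle is selecting the exponent pair (and possibly some preliminary manipulation) so that the hypothesis on $L$ dovetails exactly with the required bound. The naive choice $(\varkappa,\lambda)=(1/2,1/2)$ yields a sufficient condition involving a positive power of $H_1$ (roughly $L\gg H_1^{3}N^{3\gamma}x^{-2}d^{-2}$), whereas the statement carries the negative power $H_1^{-1/2}$. Reconciling this almost certainly requires an intermediate Cauchy--Schwarz step in the $h$-variable: one writes $S_I^{2}\le H_1\sum_h|T(h)|^2$, expands and executes the $h$-summation as a geometric sum bounded by $\min(H_1,\|\cdot\|^{-1})$, isolates the diagonal contribution $m_1^\gamma l_1^\gamma=m_2^\gamma l_2^\gamma$, and estimates the off-diagonal terms with Lemma~\ref{Exponentpairs} or Lemma~\ref{Robert-Sargosest}. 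This Cauchy--Schwarz trades a factor of $H_1^{1/2}$ (which accounts for the $H_1^{-1/2}$ on the right of the hypothesis) for a structurally simpler $h$-free sum whose diagonal size is controlled by $L$. Carrying the bookkeeping through the $\varkappa$ and $\lambda$ of a well-chosen exponent pair---and verifying the diagonal saving matches the target $x^{1-2\varepsilon}N^{1-\gamma}$ precisely under the stated $L$-threshold---will be the technical crux of the argument.
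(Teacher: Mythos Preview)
Your proposal has a real gap: you never reach a complete argument, and the route you sketch is not the one that works here. The paper's proof is a \emph{direct} application of Lemma~\ref{Robert-Sargosest} (Robert--Sargos) to $S_I$ itself, with outer variables $h\sim H_1$, $m\sim M$ and inner variable $l\sim L$. The monomial phase has $F=H_1(LM)^\gamma d^{-2}\asymp H_1N^\gamma d^{-2}\ge 1$, and the lemma gives at once
\[
S_I\ll (H_1ML)^{1+\varepsilon}\Bigl\{\bigl(F/(H_1ML^2)\bigr)^{1/4}+L^{-1/2}+F^{-1}\Bigr\}
\ll x^\varepsilon\Bigl(HN^{(\gamma+3)/4}L^{-1/4}d^{-1/2}+HNL^{-1/2}+d^2N^{1-\gamma}\Bigr),
\]
after which each of the three terms is checked to be $\ll x^{1-2\varepsilon}N^{1-\gamma}$ using only \eqref{z}, \eqref{Nxc}, \eqref{dleqz}, \eqref{HxNd} and the hypothesis on $L$. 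No exponent pair on the $l$-sum alone, no Cauchy--Schwarz in $h$, no diagonal/off-diagonal split.

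Your observation that a bare exponent-pair bound on the inner $l$-sum cannot reproduce the $H_1^{-1/2}$ in the hypothesis is correct, but the remedy is not a hand-made Cauchy--Schwarz step: the required averaging over the two outer variables is precisely what Robert--Sargos packages. You mention Lemma~\ref{Robert-Sargosest} only as a possible subsidiary tool for off-diagonal terms after squaring, whereas it is the primary and sole ingredient here. Also, do not over-read the exponent $H_1^{-1/2}$ in the hypothesis as evidence of an $h$-square-rooting trick: that lower bound on $L$ is simply the parameter $Z$ in \eqref{Conditions3}, chosen so that the Heath--Brown conditions $U^2\le Z$ and $UZ^2\ll N$ are met; Lemma~\ref{SIest} only has to verify that this threshold is \emph{sufficient} for the Robert--Sargos bound, which it is in the range $c<\tfrac{3849}{3334}$.
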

\begin{proof}
By \eqref{z}, \eqref{Nxc}, \eqref{dleqz}, \eqref{F} and \eqref{Conditions1} it follows $F\geq 1$.
Now \eqref{z}, \eqref{Nxc}, \eqref{dleqz}, \eqref{HxNd}, \eqref{F}, \eqref{Conditions1}, \eqref{SI} and Lemma \ref{Robert-Sargosest} yield
\begin{align*}
S_I&\ll(H_1ML)^{1+\varepsilon} \left\{\left(\frac{F}{H_1ML^2}\right)^{\frac{1}{4}}+\frac{1}{L^{\frac{1}{2}}}+\frac{1}{F} \right\}\\
&\ll x^\varepsilon\Big(HN^\frac{\gamma+3}{4}L^{-\frac{1}{4}}d^{-\frac{1}{2}}+HNL^{-\frac{1}{2}}+d^2N^{1-\gamma}\Big)  \\
&\ll x^{1-2\varepsilon}N^{1-\gamma}\,.
\end{align*}
This proves the lemma.
\end{proof}

\begin{lemma}\label{SIIest} Assume that
\begin{equation}
\begin{split}\label{Conditions2}
&H_1\leq H \,,\quad H_2\sim H_1 \,,\quad   |a(m)|\leq 1\,, \quad |b(l)|\leq1\,, \\
&LM\asymp N\,,\quad N^{2\gamma}H_1x^{6\varepsilon-2} \ll L\ll N^{\frac{1}{3}}\,.
\end{split}
\end{equation}
Set
\begin{equation}\label{SII}
S_{II}=\sum\limits_{H_1\leq h\leq H_2}\bigg|\sum\limits_{m\sim M}a(m)\sum\limits_{l\sim L}b(l)e\big(hm^\gamma l^\gamma d^{-2}\big)\bigg|\,.
\end{equation}
Then
\begin{equation*}
S_{II}\ll x^{1-2\varepsilon}N^{1-\gamma}\,.
\end{equation*}
\end{lemma}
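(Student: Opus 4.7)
The plan is to apply Lemma \ref{Bakerest} to $S_{II}$ after the standard manoeuvre of removing the outer absolute value. For each $h\in[H_1,H_2]$ choose a unimodular $\epsilon_h$ with $|X_h|=\epsilon_h X_h$, where $X_h=\sum_{m\sim M}a(m)\sum_{l\sim L}b(l)\,e(hm^\gamma l^\gamma d^{-2})$. Setting $b'(h,l):=\epsilon_h b(l)$, which still satisfies $|b'(h,l)|\le 1$, we rewrite
\[
S_{II}=\sum_{m\sim M}a(m)\sum_{h\sim H_1}\sum_{l\sim L}b'(h,l)\,e\big(hm^\gamma l^\gamma d^{-2}\big).
\]
This matches the template of Lemma \ref{Bakerest} under the correspondence $(m,m_1,m_2)\leftrightarrow(m,h,l)$, with exponents $(\alpha,\alpha_1,\alpha_2)=(\gamma,1,\gamma)$ and phase coefficient equal to the $F$ of \eqref{F}.

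The hypotheses are routine: $\gamma<1$, $\alpha\alpha_1\alpha_2=\gamma^2\ne 0$, and the required $F\ge M_1M_2=H_1L$ reduces, via $LM\asymp N$ and $F\asymp H_1N^\gamma d^{-2}$, to $Ld^2\le N^\gamma$, which is immediate from $L\ll N^{1/3}$, $d\le z$, $\gamma>1/3$, and $N\ge x/(z\log^4x)$. Lemma \ref{Bakerest} then yields
\[
S_{II}\ll (H_1ML)^{1+\varepsilon}\left\{\frac{1}{(H_1L)^{1/2}}+\left(\frac{F}{H_1L}\right)^{\frac{\varkappa}{2(1+\varkappa)}}\left(\frac{1}{M}\right)^{\frac{1+\varkappa-\lambda}{2(1+\varkappa)}}\right\}
\]
for any exponent pair $(\varkappa,\lambda)$.

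The first term contributes $\ll x^\varepsilon H_1^{1/2}L^{-1/2}N$, and the desired inequality $H_1^{1/2}L^{-1/2}N\ll x^{1-2\varepsilon}N^{1-\gamma}$ is exactly equivalent to the lower bound $L\gg H_1N^{2\gamma}x^{6\varepsilon-2}$ imposed in \eqref{Conditions2}. In the second term the net $L$-exponent is $(1-\lambda)/(2(1+\varkappa))\ge 0$, so it is maximized at $L\asymp N^{1/3}$; substituting this, together with $H_1\le H=x^{\varepsilon-1}Nd^2$, $d\le z$, and $N\le x^c$, converts the target bound into a polynomial inequality in $c$. The main obstacle---and the step on which the numerical constant $3849/3334$ ultimately hinges---is the selection of an exponent pair $(\varkappa,\lambda)$ for which this inequality holds simultaneously with the corresponding Type I range from Lemma \ref{SIest}; this is the only non-routine ingredient.
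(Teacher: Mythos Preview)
Your setup is exactly the paper's: remove the modulus, apply Lemma~\ref{Bakerest} with $(m,m_1,m_2)\leftrightarrow(m,h,l)$ and $F=H_1L^\gamma M^\gamma d^{-2}$, and observe that the first term of Baker's bound is controlled precisely by the lower constraint $L\gg H_1N^{2\gamma}x^{6\varepsilon-2}$. (Small cosmetic point: the prefactor in Lemma~\ref{Bakerest} is $MM_1M_2\log(2M_1M_2)$, not $(MM_1M_2)^{1+\varepsilon}$; this changes nothing.)

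The genuine gap is that you stop before doing the one thing that actually proves the lemma: you identify the choice of exponent pair as ``the only non-routine ingredient'' and then do not choose one. The paper takes
\[
(\varkappa,\lambda)=BA^5BA^2BA^2B(0,1)=\Bigl(\tfrac{480}{1043},\,\tfrac{528}{1043}\Bigr),
\]
so that $\tfrac{\varkappa}{2(1+\varkappa)}=\tfrac{240}{1523}$ and $\tfrac{1+\varkappa-\lambda}{2(1+\varkappa)}=\tfrac{995}{3046}$. Plugging this into the second term, using $H_1\le H=x^{\varepsilon-1}Nd^2$, $F/(H_1L)\asymp N^\gamma/(Ld^2)$, and the worst case $L\asymp N^{1/3}$, $M\asymp N^{2/3}$, one gets
\[
x^\varepsilon H N\Bigl(\tfrac{N^\gamma}{Ld^2}\Bigr)^{240/1523}M^{-995/3046}
\ll x^{2\varepsilon-1}N^{2}d^{2-\frac{480}{1523}}N^{\frac{240}{1523}(\gamma-\frac13)}N^{-\frac{995}{3046}\cdot\frac23},
\]
and comparing with $x^{1-2\varepsilon}N^{1-\gamma}$ (via $d\le z$ and $N\le x^c$) reduces to a linear inequality in $c$ that holds exactly for $c<\tfrac{3849}{3334}$. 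Without this computation the proof is not complete. Also, your remark that the pair must be chosen ``simultaneously with the Type~I range'' is slightly off: Lemma~\ref{SIest} uses Robert--Sargos and involves no exponent pair; the compatibility of the $L$-ranges with Heath--Brown's identity is already built into the choices of $U,V,Z$ in the proof of Lemma~\ref{Sc9est}, and the exponent pair here governs only the second term of the Type~II bound.
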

\begin{proof}
By \eqref{z}, \eqref{Nxc}, \eqref{dleqz}, \eqref{F} and \eqref{Conditions2} it follows $F\geq LH_1$.
Now \eqref{z}, \eqref{Nxc}, \eqref{dleqz}, \eqref{HxNd}, \eqref{F}, \eqref{Conditions2}, \eqref{SII},
Lemma \ref{Exponentpairs} with exponent pair
\begin{equation*}
BA^5BA^2BA^2B(0, 1)=\left(\frac{480}{1043}\,,\,\frac{528}{1043}\right)
\end{equation*}
and Lemma \ref{Robert-Sargosest} give us
\begin{align*}
S_{II}&\ll \big(H_1ML\log2LH_1\big)\left\{ \frac{1}{(LH_1)^{\frac{1}{2}}}
+\left(\frac{F}{LH_1}\right)^\frac{240}{1523} \left(\frac{1}{M}\right)^\frac{995}{3046}\right\}\\
&\ll x^\varepsilon\left\{ H^\frac{1}{2}NL^{-\frac{1}{2}}+HN\left(\frac{N^\gamma}{Ld^2}\right)^\frac{240}{1523} \left(\frac{1}{M}\right)^\frac{995}{3046}\right\}  \\
&\ll x^{1-2\varepsilon}N^{1-\gamma}\,.
\end{align*}
This proves the lemma.
\end{proof}

\begin{lemma}\label{Sc9est}  For the sum denoted by \eqref{Sc9} we have
\begin{equation*}
S^{(9)}_c(N_1)\ll x^{1-\varepsilon}N^{1-\gamma}\,.
\end{equation*}
\end{lemma}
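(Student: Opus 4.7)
My plan is to reduce $S^{(9)}_c(N_1)$ to bilinear exponential sums via Heath-Brown's identity (Lemma \ref{Heath-Brown}) and then estimate each piece using Lemma \ref{SIest} or Lemma \ref{SIIest}.

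I would begin by splitting the outer summation over $h \in [1, H]$ in \eqref{Sc9} dyadically into $O(\log x)$ ranges of the form $h \sim H_1$ with $H_1 \le H$, so that it suffices to bound, uniformly in such $H_1$,
\begin{equation*}
T(H_1) = \sum_{H_1 \le h \le H_2} \bigg|\sum_{N < n \le N_1} \Lambda(n) e(h n^\gamma d^{-2})\bigg|, \qquad H_2 \sim H_1,
\end{equation*}
by $O(x^{1-2\varepsilon} N^{1-\gamma} \log^{6} x)$. To the inner sum over $n$ I would apply Lemma \ref{Heath-Brown} with $P = N$, $P_1 = N_1$ and parameters $U$, $V$, $Z$ chosen so that the hypotheses $U^2 \le Z$, $128UZ^2 \le N_1$, $2^{18}N_1 \le V^3$ and $U < V \le Z \le N$ all hold. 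This decomposes the inner sum into $O(\log^6 N)$ bilinear sums of either Type I (with $L \ge Z$ and outer coefficient $a(m) \ll \tau_5(m) \log N \ll x^\varepsilon$) or Type II (with $U \le L \le V$ and both coefficients of size $\ll x^\varepsilon$).

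Each such bilinear sum, after absorbing the $x^\varepsilon$-size coefficients, is to be bounded by either Lemma \ref{SIest} or Lemma \ref{SIIest}. To handle every possible dyadic pair $(L, M)$ with $LM \asymp N$, I would apply, as appropriate, one of four estimates: Lemma \ref{SIest} with $l$ as the inner variable (needing $L \gg N^{1/2-\gamma} H_1^{-1/2} x^{1-3\varepsilon}$); Lemma \ref{SIest} after interchanging $m$ and $l$ (permissible since both coefficients are $\ll x^\varepsilon$); Lemma \ref{SIIest} in its stated form (needing $N^{2\gamma} H_1 x^{6\varepsilon - 2} \ll L \ll N^{1/3}$); or Lemma \ref{SIIest} with $m$ and $l$ swapped. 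Each applicable estimate contributes $O(x^{1-2\varepsilon} N^{1-\gamma})$, so summing over the $O(\log^6 N)$ decomposed pieces and the $O(\log x)$ dyadic $h$-ranges gives
\begin{equation*}
S^{(9)}_c(N_1) \ll x^{1-2\varepsilon} N^{1-\gamma} \log^{7} x \ll x^{1-\varepsilon} N^{1-\gamma},
\end{equation*}
as required.

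The main obstacle is the selection of $U$, $V$, $Z$ and the verification that, under the hypothesis $1 < c < \frac{3849}{3334}$, the four applicability regions above truly exhaust every dyadic $L \in [U, V] \cup [Z, N]$ arising from the decomposition, for every dyadic $H_1 \le H = x^{\varepsilon - 1} N d^2$. This reduces to checking a chain of inequalities among the exponents of $N$, $x$, $d$, and $H_1$; the precise numerical threshold $\frac{3849}{3334}$ is dictated by the exponent pair $BA^5BA^2BA^2B(0,1) = \left(\frac{480}{1043}, \frac{528}{1043}\right)$ used in Lemma \ref{SIIest}, which governs the most restrictive case among these inequalities.
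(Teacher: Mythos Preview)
Your proposal is correct and follows the same route as the paper: dyadic splitting in $h$, Heath--Brown's identity (Lemma~\ref{Heath-Brown}) for the inner sum over $n$, and then Lemmas~\ref{SIest} and~\ref{SIIest} for the resulting Type~I and Type~II sums. The paper makes the ``main obstacle'' you flag completely explicit by taking
\[
U=N^{2\gamma}H_1x^{6\varepsilon-2},\qquad V=N^{1/3},\qquad Z=\big[N^{1/2-\gamma}H_1^{-1/2}x^{1-3\varepsilon}\big]+\tfrac12,
\]
so that the Type~II range $U\le L\le V$ is \emph{exactly} the hypothesis of Lemma~\ref{SIIest} and the Type~I condition $L\ge Z$ is \emph{exactly} the hypothesis of Lemma~\ref{SIest}; with these choices no variable-swapping is needed, and the verification reduces to checking the Heath--Brown constraints $2\le U<V\le Z\le N$, $U^2\le Z$, $128UZ^2\le N_1$, $2^{18}N_1\le V^3$ under \eqref{Nxc}, \eqref{dleqz}, \eqref{HxNd}.
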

\begin{proof}
Splitting the range of $h$ into dyadic subintervals from \eqref{Sc9} we get
\begin{equation}\label{Sc9est1}
S^{(9)}_c(N_1)\ll |S^{(10)}_c(N_1)|\log x\,,
\end{equation}
where
\begin{equation}\label{Sc10}
S^{(10)}_c( N_1)=\sum\limits_{h\sim H_1}\bigg|\sum\limits_{N<n\leq N_1}\Lambda(n)e(hn^\gamma d^{-2})\bigg|
\end{equation}
and $H_1\leq H/2$.
Put
\begin{equation}\label{Conditions3}
U=N^{2\gamma}H_1x^{6\varepsilon-2}\,,\quad V=N^{\frac{1}{3}}\,,\quad
Z=\Big[N^{\frac{1}{2}-\gamma}H_1^{-\frac{1}{2}}x^{1-3\varepsilon}\Big]+\frac{1}{2}\,.
\end{equation}
Using \eqref{Sc10}, \eqref{Conditions3}, Lemma \ref{Heath-Brown}, Lemma \ref{SIest} and  Lemma \ref{SIIest} we deduce
\begin{equation}\label{Sc10est}
S^{(10)}_c(N_1)\ll x^{1-\frac{3\varepsilon}{2}}N^{1-\gamma}\,.
\end{equation}
Now \eqref{Sc9est1} and \eqref{Sc10est} imply the proof of the lemma.
\end{proof}
Taking into account \eqref{z}, \eqref{Sc4est3}, \eqref{Sc5est}, \eqref{Sc7est}, \eqref{Sc8est}, \eqref{Sc6est2}
and Lemma \ref{Sc9est} we find
\begin{equation}\label{Sc4est4}
S^{(4)}_c(x)\ll\frac{x}{\log^2x}\,.
\end{equation}

\subsection{Asymptotic formula for $\mathbf{S^{(1)}_c(x)}$}
\indent

From \eqref{z}, \eqref{Sc1est1}, \eqref{Sc3est} and \eqref{Sc4est4} it follows
\begin{align}\label{Sc1est2}
&S^{(1)}_c(x)=\frac{6}{c\pi^2}\frac{x}{\log x}+\mathcal{O}\left(\frac{x}{\log^2x}\right)\,.
\end{align}

\section{Upper bound for $\mathbf{S^{(2)}_c(x)}$}\label{SectionSc2}
\indent

By \eqref{Sc2} we obtain
\begin{align}\label{Sc2est}
&S^{(2)}_c(x)\ll\sum_{z<d\leq \sqrt{x}}\sum\limits_{n\leq x\atop{ n\equiv 0\,(d^2)}}1
\ll x\sum_{z<d\leq \sqrt{x}}\frac{1}{d^2}\ll xz^{-1}\,.
\end{align}

\section{The end of the proof}\label{Sectionfinal}

Summarizing \eqref{z}, \eqref{Scest}, \eqref{Sc1est2} and \eqref{Sc2est}
we establish asymptotic formula \eqref{asymptotic formula1}.

This completes the proof of  Theorem \ref{Theorem}.

\vskip20pt
\footnotesize
\begin{flushleft}
S. I. Dimitrov\\
Faculty of Applied Mathematics and Informatics\\
Technical University of Sofia \\
Blvd. St.Kliment Ohridski 8, \\
Sofia 1756, Bulgaria\\
e-mail: sdimitrov@tu-sofia.bg\\
\end{flushleft}


\begin{thebibliography}{100}

\bibitem{Baker} R. C. Baker, {\it The square-free divisor problem},
Quart. J. Math. Oxford, \textbf{45}, (1994), 269 -- 277.

\bibitem{Baker-Harman} R. C. Baker, G. Harman, J. Rivat, {\it Primes of the form $[n^c]$},
J. Number Theory, \textbf{50}, (1995), 261 -- 277.

\bibitem{Baker-Banks} R. C. Baker, W. Banks, V. Guo, A. Yeager, {\it Piatetski-Shapiro primes from almost primes},
Monatsh. Math., \textbf{174}, (2014), 357 -- 370.

\bibitem{Banks} W. D. Banks, V. Guo, I. Shparlinski, {\it Almost primes of the form $[p^c]$},
Indag. Math., \textbf{27}, (2016), 423 -- 436.

\bibitem{Graham-Kolesnik} S. W. Graham, G. Kolesnik, {\it Van der Corput's Method of Exponential Sums},
Cambridge University Press, New York, (1991).

\bibitem{Heath1} D. R. Heath-Brown, {\it Prime numbers in short intervals and a generalized Vaughan identity},
Canad. J. Math., \textbf{34}, (1982), 1365 -- 1377.

\bibitem{Heath2}  D. R. Heath-Brown, {\it The Piatetski-Shapiro prime number theorem},
J. Number Theory, \textbf{16}, (1983), 242 -- 266.

\bibitem{Jia1}  C.-H. Jia, {\it On Piatetski-Shapiro prime number theorem II},
Science in China Ser. A, \textbf{36}, (1993), 913 -- 926.

\bibitem{Jia2}  C.-H. Jia, {\it On Piatetski-Shapiro prime number theorem},
Chinese Ann. Math., \textbf{15}, (1994), 9 -- 22.

\bibitem{Kolesnik1} G. A. Kolesnik, {\it The distribution of primes in sequnces of the form $[n^c]$},
Mat. Zametki, \textbf{2}, (1967), 117 -- 128.

\bibitem{Kolesnik2} G. A. Kolesnik , {\it Primes of the form $[n^c]$},
Pacific J. Math., \textbf{118}, (1985), 437– -- 447.

\bibitem{Kumchev} A. Kumchev, {\it On the distribution of prime numbers of the form $[n^c]$},
Glasg. Math. J., \textbf{41}, (1999), 85 -- 102.

\bibitem{Leitmann} D. Leitmann, {\it Abschatzung trigonometrischer summen},
J. Reine Angew. Math., \textbf{317}, (1980), 209 --219.

\bibitem{Liu-Rivat} H. Q. Liu , J. Rivat, {\it On the Piatetski-Shapiro prime number theorem},
Bull. London Math. Soc., \textbf{24}, (1992), 143 -- 147.

\bibitem{Shapiro} I. I. Piatetski-Shapiro, {\it On the distribution of prime numbers in sequences of the form $[f(n)]$},
Mat. Sb., {\bf 33}, (1953), 559 -- 566.

\bibitem{Rivat} J. Rivat, {\it Autour d'un theorem de Piatetski-Shapiro},
Thesis, Universit\'{e} de Paris Sud, (1992).

\bibitem{Rivat-Sargos} J. Rivat, P. Sargos, {Nombres premiers de la forme $[n^c]$},
Canad. J. Math., \textbf{53}, (2001), 414 -- 433.

\bibitem{Rivat-Wu} J. Rivat, J. Wu, {\it Prime numbers of the form $[n^c]$},
Glasg. Math. J, {\bf 43}, (2001), 237 -- 254.

\bibitem{Robert-Sargos} O. Robert, P. Sargos, {\it Three-dimemsional exponential sums with monomials},
J. Reine Angew. Math., {\bf591}, (2006), 1 -- 20.

\bibitem{Vaaler} J. D. Vaaler, {\it Some extremal problems in Fourier analysis},
Bull. Amer. Math. Soc. {\bf12}, (1985), 183 -- 216.

\end{thebibliography}
\end{document}